\documentclass[12pt]{article}
  \oddsidemargin=0pt
\evensidemargin=0pt \setlength{\textheight}{620pt}
\setlength{\textwidth}{440pt}
\usepackage[english]{babel}
\usepackage[square,numbers]{natbib}
\bibliographystyle{abbrvnat}
\usepackage{amscd,float}
\usepackage{latexsym,amsmath,amssymb,amsbsy, amsthm}

\newcommand{\R}{\mathbb R}
\renewcommand{\S}{\mathcal{S}}

\newcommand{\acal}{\mathcal{A}}

\newcommand{\comment}[1]{}

\numberwithin{equation}{section}

\renewcommand{\P}{\mathbb{P}}

\usepackage[colorlinks = TRUE, linkcolor = blue, citecolor = blue,
urlcolor = blue]{hyperref}
\usepackage{setspace}
\usepackage{lscape,fancyhdr,fancybox}
\usepackage{graphicx,epsfig, xy}
\usepackage{color}
\usepackage[hmarginratio=1:1, vmarginratio =5:5,
textheight=22cm,bindingoffset=1.5cm, textwidth=14.6cm]{geometry}
\usepackage{float}
\usepackage{graphicx}
\usepackage{caption}
\usepackage{subcaption}




\newtheorem{theorem}{Theorem}[section]
\newtheorem{lemma}{Lemma}[section]
\newtheorem{proposition}{Proposition}[section]

\numberwithin{equation}{section}

\newtheorem{remark}{Remark}[section]

\theoremstyle{definition}
\newtheorem{definition}{Definition}[section]

\usepackage{amsfonts}

\usepackage{txfonts,enumerate}

\newcommand{\pr}{\mathbb P}

\DeclareMathOperator{\E}{E}

\def\ind{\mathbf{1}}
\renewcommand{\l}{\left}
\renewcommand{\r}{\right}
\definecolor{iblue}{rgb}{0.1,0,0.75}
\definecolor{ired}{rgb}{0.9,0,0.1}

\newcommand{\beq}{\begin{eqnarray}}
\newcommand{\eeq}{\end{eqnarray}}
\newcommand{\ben}{\begin{eqnarray*}}
\newcommand{\een}{\end{eqnarray*}}

\title{A Bernstein type inequality for sums of selections from three dimensional arrays}
 \author{
\sc Debapratim Banerjee \footnote{Email:dban@wharton.upenn.edu} ~and~ Matteo Sordello \footnote{Email: sordello@wharton.upenn.edu}
 \\ \small Dept. of Statistics, University of Pennsylvania\\}
\begin{document}
 \maketitle
 
\begin{abstract}
\noindent
We consider the three dimensional array $\acal = \{a_{i,j,k}\}_{1\le i,j,k \le n}$, with $a_{i,j,k} \in [0,1]$, and the two random statistics $T_{1}:= \sum_{i=1}^n \sum_{j=1}^n a_{i,j,\sigma(i)}$ and $T_{2}:= \sum_{i=1}^{n} a_{i,\sigma(i),\pi(i)}$, where $\sigma$ and $\pi$ are chosen independently from the set of permutations of $\{1,2,\ldots,n  \}.$ These can be viewed as natural three dimensional generalizations of the statistic $T_{3}=\sum_{i=1}^{n} a_{i,\sigma(i)}$, considered by Hoeffding \cite{Hoe51}. Here we give Bernstein type concentration inequalities for $T_{1}$ and $T_{2}$ by extending the argument for concentration of $T_{3}$ by Chatterjee \cite{Cha05}.
\end{abstract}

 \section{Arrays and Concentration Inequalities}
Let $\acal = \{a_{i,j,k}\}_{1\le i,j,k \le n}$ be a three dimensional array with $a_{i,j,k} \in [0,1]$, and consider the following two statistics
\begin{equation}\label{def:stat}
T_{1}:= \sum_{i=1}^n \sum_{j=1}^n a_{i,j,\sigma(i)} \qquad\text{and} \qquad T_{2}:= \sum_{i=1}^{n} a_{i,\sigma(i),\pi(i)}
\end{equation}
where $\sigma$ and $\pi$ are chosen independently and uniformly from the set $S_n$ of permutations of $[n] = \{1,2,\ldots,n\}$. Our goal is to obtain Bernstein type tail bounds for the statistics $T_{1}$ and $T_{2}$. Statistics of these type have already been considered in literature; for example, when the dimension is two, the statistic
\[
T_{3}:=\sum_{i=1}^{n} a_{i,\sigma(i)}
\]
 where $\sigma$ is drawn uniformly from $S_{n}$ was studied by Hoeffding \cite{Hoe51}, who proved that, under certain conditions, it has an asymptotic normal distribution as $n$ goes to infinity. In fact, the special case when $a_{i,j}=c_{i}\cdot d_{j}$ dates back to the works of Wald and Wolfwitz \cite{WW44} and Noether \cite{Noe49}. Another example of the statistic $T_{3}$ is the Spearman's footrule, useful in non-parametric statistics, where $a_{i,j}=|i-j|.$ Statistics $T_{1}$ and $T_{2}$ can be viewed as natural generalizations of statistic $T_{3}$ in three dimensions. However, in this paper we are concerned about concentration inequalities for $T_{1}$ and $T_{2}$, and not on their asymptotic distribution. The concentration of $T_{3}$ was considered by Chatterjee \cite{Cha05} (page 52); specifically he obtained an elegant tail bound of Bernstein type.
\begin{theorem}\label{thm:Cha}
Let $\{a_{i,j}\}_{1\le i,j,\le n} \in [0,1]$ and $T_{3}$ be as above. Then for any $t \ge 0,$ 
$$\mathbb{P}\left(|T_{3}- E(T_{3})| \ge  t \right) \le 2\exp\left\{ -\frac{t^2}{4\E[T_3]+2t} \right\}$$
\end{theorem}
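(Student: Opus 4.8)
The plan is to reproduce Chatterjee's exchangeable-pair argument for $T_3$, writing $f(\sigma) := T_3 = \sum_{i=1}^n a_{i,\sigma(i)}$. First I would build an exchangeable pair $(\sigma,\sigma')$ by drawing $\sigma$ uniformly from $S_n$ and setting $\sigma' = \sigma \circ (I\,J)$, where the unordered pair $\{I,J\}$ of distinct coordinates is chosen uniformly and independently; thus $\sigma'$ swaps the two values $\sigma(I)$ and $\sigma(J)$ and agrees with $\sigma$ elsewhere. This pair is exchangeable because applying the same transposition to $\sigma'$ returns $\sigma$. The goal is then to produce an antisymmetric $F(\sigma,\sigma')$ with $\E[F(\sigma,\sigma') \mid \sigma] = f(\sigma) - \E f$ and to control $v(\sigma) := \tfrac12 \E[\,|F(\sigma,\sigma')(f(\sigma)-f(\sigma'))| \mid \sigma\,]$, since a bound $v(\sigma) \le B(f(\sigma)-\E f) + C$ feeds directly into the exchangeable-pair tail machinery.

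The second step is the algebraic identity that makes the method work. Writing $\Delta_{ij} := a_{i,\sigma(i)} + a_{j,\sigma(j)} - a_{i,\sigma(j)} - a_{j,\sigma(i)}$, a direct computation gives $f(\sigma) - f(\sigma') = \Delta_{IJ}$, and averaging over $\{I,J\}$ yields
\[
\E[f(\sigma)-f(\sigma') \mid \sigma] = \frac{2}{n-1}\Bigl(f(\sigma) - \frac{S}{n}\Bigr), \qquad S := \sum_{i,k} a_{i,k},
\]
where I use that $\sum_j a_{i,\sigma(j)} = \sum_k a_{i,k}$ because $\sigma$ is a bijection. Since $\E f = S/n$, this says $F(\sigma,\sigma') := \tfrac{n-1}{2}(f(\sigma)-f(\sigma'))$ is the required antisymmetric function, and consequently $v(\sigma) = \tfrac{n-1}{4}\,\E[(f(\sigma)-f(\sigma'))^2 \mid \sigma] = \tfrac{1}{4n}\sum_{i\ne j}\Delta_{ij}^2$.

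The crux — and the step I expect to be the main obstacle — is bounding $\sum_{i\ne j}\Delta_{ij}^2$ \emph{linearly} in $f(\sigma)$, since this linear (rather than uniform) control is exactly what produces the Bernstein shape with $\E[T_3]$ as the variance proxy rather than a cruder sub-Gaussian bound. Here I would exploit $a_{i,j,k}\in[0,1]$: each $\Delta_{ij}$ satisfies $|\Delta_{ij}| \le 2$ and $|\Delta_{ij}| \le a_{i,\sigma(i)}+a_{j,\sigma(j)}+a_{i,\sigma(j)}+a_{j,\sigma(i)}$, so $\Delta_{ij}^2 \le 2\bigl(a_{i,\sigma(i)}+a_{j,\sigma(j)}+a_{i,\sigma(j)}+a_{j,\sigma(i)}\bigr)$. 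Summing over $i\ne j$ using $\sum_{i\ne j} a_{i,\sigma(i)} = (n-1)f(\sigma)$ and $\sum_{i\ne j} a_{i,\sigma(j)} = S - f(\sigma)$ collapses everything to $\sum_{i\ne j}\Delta_{ij}^2 \le 4(n-2)f(\sigma) + 4S$, whence $v(\sigma) \le \tfrac{n-2}{n} f(\sigma) + \tfrac{S}{n} \le f(\sigma) + \E f = (f(\sigma)-\E f) + 2\,\E f$. Thus the hypothesis holds with $B=1$ and $C = 2\,\E[T_3]$, and it is pleasant that this elementary estimate gives the exact constant.

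Finally I would run the standard MGF argument. Setting $g = f - \E f$ and $m(\theta) = \E e^{\theta g}$, the identity $\E[g\,e^{\theta g}] = \tfrac12 \E[F(\sigma,\sigma')(e^{\theta g(\sigma)} - e^{\theta g(\sigma')})]$ combined with $|e^a - e^b| \le \tfrac12|a-b|(e^a+e^b)$ gives $|m'(\theta)| \le |\theta|\,\E[v(\sigma)e^{\theta g}]$; inserting $v \le g + 2\E f$ produces, for $0 < \theta < 1$, the differential inequality $m'(\theta)(1-\theta) \le 2\,\E[T_3]\,\theta\, m(\theta)$. Integrating yields $\log m(\theta) \le \tfrac{\E[T_3]\,\theta^2}{1-\theta}$, and the Chernoff bound at the optimal $\theta = t/(2\E[T_3]+t)$ gives $\mathbb{P}(T_3 - \E T_3 \ge t) \le \exp\{-t^2/(4\E[T_3]+2t)\}$. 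For $\theta<0$ the sign of the linear-in-$g$ term is favorable, so the correction disappears and one gets the stronger $\log m(\theta)\le \E[T_3]\,\theta^2$, hence a sub-Gaussian lower tail $\exp\{-t^2/(4\E[T_3])\}$ dominated by the same bound; a union bound over the two tails supplies the factor $2$.
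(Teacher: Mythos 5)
Your proposal is correct and takes essentially the same approach as the paper, which quotes this theorem from Chatterjee and runs exactly your scheme in its own proofs for $T_1$ and $T_2$: an exchangeable pair built from a uniform random transposition, an antisymmetric $F=c\,(T_3-T_3')$, the elementary estimate $(\alpha-\beta)^2\le D(\alpha+\beta)$ to bound $v$ linearly as $v\le f + 2\E[T_3]$, and then the Bernstein-type tail bound of Lemma \ref{lem:vx} with $B=1$, $C=2\E[T_3]$. The only immaterial differences are that you draw a distinct pair $\{I,J\}$ whereas the paper draws $I_1,I_2$ independently with replacement (both sampling schemes yield the same constants here), and that you reprove Lemma \ref{lem:vx} inline via the moment-generating-function differential inequality rather than citing it.
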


\noindent 
Chatterjee obtains this bound by the method of exchangeable pairs, and here we extend his method to obtain Bernstein type concentration inequalities for $T_{1}$ and $T_{2}$. 
\begin{theorem}\label{thm:ours}
If $T_1$ and $T_{2}$ are as defined in (\ref{def:stat}) and $\{a_{i,j,k}\}_{1\le i,j,k\le n} \in [0,1]$, then 
\begin{align}
\mathbb{P}\left(|T_{1} - \E[T_{1}]| \geq t\right) &\leq 2 \exp\left\{-\frac{t^2}{2n\cdot(2\E[T_{1}] + t)}\right\}  \label{eqn:bddsta1} \\[8pt]
 \mathbb{P}\left(\left|T_{2} - \E[T_{2}]\right| \geq t\right)  &\leq 2\exp\left\{ -\frac{(t-3 + O(1/n))^{2}}{12\E(T_{2}) + 18+ 6(1+O(1/n))(t-3)} \right\} \label{eqn:bddsta2} .
 \end{align}

\end{theorem}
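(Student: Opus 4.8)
The plan is to apply Chatterjee's method of exchangeable pairs to each statistic, exactly as in Theorem \ref{thm:Cha}. Recall the mechanism I will use: given an exchangeable pair $(W,W')$ and an antisymmetric $F$ with $\E[F\mid W]=\lambda(W-\mu)$, one writes $m(\theta)=\E[e^{\theta(W-\mu)}]$ and, using antisymmetry together with $|e^{x}-e^{y}|\le\tfrac12|x-y|(e^{x}+e^{y})$, obtains the differential inequality $|m'(\theta)|\le|\theta|\,\E[v(W)e^{\theta(W-\mu)}]$, where $v(W)=\tfrac{1}{2\lambda}\E[(W-W')^{2}\mid W]$. Whenever $v$ is controlled by an affine function of $W$, say $v(W)\le C+B(W-\mu)$, integrating this inequality and optimizing over $\theta$ yields the Bernstein bound $\mathbb{P}(|W-\mu|\ge t)\le 2\exp(-t^{2}/(2C+2Bt))$. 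So for each statistic the task reduces to (i) building the pair, (ii) verifying the linear reproduction $\E[F\mid W]=\lambda(W-\mu)$, and (iii) bounding $v$ affinely in $W$.

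For $T_{1}$ I would take $\sigma'=\sigma\circ(I\,J)$, with $(I,J)$ a uniformly chosen pair of distinct indices, and $F=T_{1}-T_{1}'$. Only the two rows $i\in\{I,J\}$ change, so $T_{1}-T_{1}'=\sum_{j}(a_{I,j,\sigma(I)}-a_{I,j,\sigma(J)})+\sum_{j}(a_{J,j,\sigma(J)}-a_{J,j,\sigma(I)})$. Averaging over $(I,J)$ and using $\sum_{k}a_{i,j,k}=\sum_{J}a_{i,j,\sigma(J)}$ together with $\E[T_{1}]=\tfrac1n\sum_{i,j,k}a_{i,j,k}$, a short computation gives the exact reproduction $\E[T_{1}-T_{1}'\mid\sigma]=\tfrac{2}{n-1}(T_{1}-\E[T_{1}])$, i.e. $\lambda=2/(n-1)$. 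For the variance proxy, writing $r_{i}=\sum_{j}a_{i,j,\sigma(i)}\in[0,n]$ and using $(a-b)^{2}\le n(a+b)$ for $a,b\in[0,n]$, I bound $(T_{1}-T_{1}')^{2}$ by $2n$ times a sum of four such row sums; averaging and simplifying yields $v(T_{1})=\tfrac{n-1}{4}\E[(T_{1}-T_{1}')^{2}\mid\sigma]\le nT_{1}+n\E[T_{1}]$, which is affine in $T_{1}$ with $C=2n\E[T_1]$ and $B=n$. Feeding this into the mechanism above produces \eqref{eqn:bddsta1}.

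For $T_{2}$ the key observation is a conditioning reduction: fixing $\pi$ and setting $\tilde a_{i,j}=a_{i,j,\pi(i)}\in[0,1]$, we have $T_{2}=\sum_{i}\tilde a_{i,\sigma(i)}$, which is precisely a $T_{3}$-statistic in $\sigma$. Hence Theorem \ref{thm:Cha} applies verbatim conditionally on $\pi$ and gives concentration of $T_{2}$ around its conditional mean $\E[T_{2}\mid\pi]=U/n$, where $U=\sum_{i,j}a_{i,j,\pi(i)}$. But $U$ has exactly the distribution of $T_{1}$ (with $\pi$ in the role of $\sigma$), so inequality \eqref{eqn:bddsta1} controls the fluctuation of the random center $U/n$ about $\mu=\E[T_{2}]=\tfrac1{n^{2}}\sum_{i,j,k}a_{i,j,k}$. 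The bound then follows from the triangle inequality $|T_{2}-\mu|\le|T_{2}-U/n|+|U/n-\mu|$, splitting $t$ across the two pieces, and a union bound.

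The main obstacle is this last combination. The inner (conditional) estimate carries the \emph{random} quantity $U/n$ in its denominator, so to turn the two separate Bernstein bounds into a single clean inequality one must restrict to a good event $\{|U/n-\mu|\le q\}$, replace $U/n$ by $\mu+q$ there, control the complement through \eqref{eqn:bddsta1}, and then optimize the split $t=r+q$ and merge the two exponentials. It is exactly this bookkeeping---bounding a sum of two Bernstein terms by a single one while absorbing the deviation of the center---that forces the lower-order corrections, namely the additive shift and the $O(1/n)$ terms appearing in \eqref{eqn:bddsta2}; pinning down these constants, rather than the leading $t^{2}/(12\E[T_{2}]+6t)$ behaviour, is the delicate part of the argument.
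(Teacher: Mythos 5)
Your argument for \eqref{eqn:bddsta1} is correct and is essentially the paper's proof: the paper also forms the pair by composing $\sigma$ with a random transposition, obtains the linear reproduction $\E[T_1-T_1'\mid T_1]=\frac{2}{n}(T_1-\E[T_1])$, and bounds the variance proxy via the same $(\alpha-\beta)^2\le D(\alpha+\beta)$ device, landing on $v(T_1)\le n(T_1+\E[T_1])$, i.e.\ $B=n$, $C=2n\E[T_1]$ in Lemma \ref{lem:vx}. The only difference is that the paper draws $I_1,I_2$ independently with replacement (rate $2/n$) while you draw them distinct (rate $2/(n-1)$); both give the same constants, so this part is fine.

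For \eqref{eqn:bddsta2}, however, your route is genuinely different from the paper's and contains a real gap. The paper never conditions on $\pi$: it builds a joint exchangeable pair by drawing three distinct indices and applying a $3$-cycle to one of $\sigma,\pi$ and its inverse to the other (coin-flipped), computes $\E[T_2-T_2'\mid T_2]$ exactly up to the boundary term $Y(\sigma,\pi)=\sum_{(i,j,k)\in C_2}a_{i,\sigma(j),\pi(k)}\in[0,3n(n-1)]$, and it is this term --- not any splitting bookkeeping --- that makes $f(T_2)$ differ from $T_2-\E[T_2]$ by at most $3+O(1/n)$ and produces the shift $t-3$ in \eqref{eqn:bddsta2}. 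Your conditional reduction (Theorem \ref{thm:Cha} given $\pi$, plus \eqref{eqn:bddsta1} applied to the random center $U/n$, which indeed has the law of $T_1/n$ and mean $\E[T_2]$) is sound and does yield \emph{a} Bernstein-type bound, but the merging step you explicitly defer is precisely where the stated constants are lost, not merely fine-tuned. The two tails you must combine are $2\exp\l\{-\f{r^2}{4(\mu+q)+2r}\r\}$ and $2\exp\l\{-\f{q^2}{4\mu+2q}\r\}$ with $r+q=t$ and $\mu=\E[T_2]$. In the regime $t\ll\mu$ the exponents are essentially $r^2/(4\mu)$ and $q^2/(4\mu)$, and since $\max_{r+q=t}\min(r^2,q^2)=t^2/4$ is attained at $r=q=t/2$, no split beats the exponent $t^2/(16\mu)$, and the union bound gives prefactor $4$; whereas \eqref{eqn:bddsta2} demands $t^2/(12\E[T_2]+\cdots)$ with prefactor $2$, and the $O(1/n)$ slack in the statement attaches only to the linear term, so the coefficient $12$ must be met exactly. (A split $r=2t/3$ does recover the linear-regime rate $t/6$, but no fixed or $t$-dependent split repairs the Gaussian regime.) Hence your closing claim that the leading $t^2/(12\E[T_2]+6t)$ behaviour survives the bookkeeping is unjustified as proposed: to reach the constant $12$ you would need an mgf-level composition of the conditional bound with the bound on $U/n$ rather than a union bound, at which point you would in effect be re-deriving the paper's joint-pair computation, including an analogue of the $Y(\sigma,\pi)$ correction that generates the $\pm(3+O(1/n))$ shift.
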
 

\noindent 
Concentrations of functions of random permutations have also been studied by Talagrand (Theorem 5.1) \cite{Tal}, Murray \cite{Murr} and McDiarmid \cite{Mac}. However, as mentioned in Chatterjee \cite{Cha05}, apart from Talagrand's Theorem 5.1 none of these results are able to give Bernstein type concentration inequalities as above.

\section{On the method of exchangeable pair} 
We first need to recall some notions on the theory of exchangeable pairs as used by Chatterjee \cite{Cha05}. 

\begin{definition}
Suppose $X$ is a random variable on the measure space $(\Omega, \mathcal{F},\mathbb{P})$ and $X'$ is another random variable defined on the same measure space. The pair $(X,X')$ is called an exchangeable pair if $(X,X')\stackrel{d}{=}(X',X)$.
\end{definition}

\noindent 
The method of exchangeable pairs exploits three useful functions:
\begin{itemize}
\item A function $F:\mathbb{R}^2 \to \mathbb{R}$, measurable and almost surely anti-symmetric, i.e. such that $F(X,X')=-F(X',X)$ almost surely.
\item The function $f: \R \to \R$ defined by $f(X) := \E\left[ F(X,X') \left| X \right.\right]$. This is a fundamental quantity in the the concentration inequality.
\item The function $v(X)$, that serves as a stochastic bound size of $f(X)$, and which is defined by
\begin{equation}\label{def:v(x)}
v(X) := \frac{1}{2} \E\left[|(f(X) - f(X'))\cdot F(X, X')|\ \big|\ X\right].
\end{equation}
\end{itemize}
The following lemma from Chatterjee \cite{Cha05} tells us how the concentration of $f(X)$ is governed by a bound on $v(X)$.
\begin{lemma}(Theorem 3.9 in \cite{Cha05})\label{lem:vx}
Suppose $(X,X')$ is an exchangeable pair and $F(X,X')$, $f(X)$ and $v(X)$ are defined as before, with $v(X)\le C+Bf(X)$ almost surely for some known fixed constants $B$ and $C$. Then 
\[
\mathbb{P}\left(|f(X)| \geq t\right) \leq 2 \exp\l\{-\frac{t^2}{2C + 2Bt}\r\} 
\]
\end{lemma}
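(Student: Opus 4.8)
The plan is to control the moment generating function $m(\theta):=\E[e^{\theta f(X)}]$ and then run a Chernoff argument. The engine of the computation is the antisymmetry identity coming from exchangeability: since $(X,X')$ is exchangeable and $F$ is a.s.\ antisymmetric, for every integrable $g$ one has $\E[g(X)F(X,X')]=\tfrac{1}{2}\E[(g(X)-g(X'))F(X,X')]$. Taking $g\equiv1$ gives $\E[f(X)]=\E[F(X,X')]=0$, so $m(0)=1$ and $m'(0)=0$.

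First I would differentiate under the expectation, $m'(\theta)=\E[f(X)e^{\theta f(X)}]$, and rewrite the right-hand side using $f(X)=\E[F(X,X')\mid X]$ and the $X$-measurability of $e^{\theta f(X)}$ to get $m'(\theta)=\E[F(X,X')e^{\theta f(X)}]$. The antisymmetry identity with $g(X)=e^{\theta f(X)}$ then produces
\[
m'(\theta)=\tfrac{1}{2}\,\E\!\left[(e^{\theta f(X)}-e^{\theta f(X')})\,F(X,X')\right].
\]

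Next, for $\theta\ge0$ I would apply the elementary bound $|e^{u}-e^{v}|\le\tfrac12|u-v|(e^{u}+e^{v})$ with $u=\theta f(X)$ and $v=\theta f(X')$. Estimating the integrand in absolute value splits it into a term carrying $e^{\theta f(X)}$ and one carrying $e^{\theta f(X')}$; exchangeability (together with $|F(X',X)|=|F(X,X')|$) shows these two terms are equal, so after conditioning on $X$ and recognizing the definition \eqref{def:v(x)} of $v(X)$ I obtain $m'(\theta)\le\theta\,\E[v(X)e^{\theta f(X)}]$. Inserting the hypothesis $v(X)\le C+Bf(X)$ gives $m'(\theta)\le\theta C\,m(\theta)+\theta B\,m'(\theta)$, i.e.\ the differential inequality $(1-\theta B)\,m'(\theta)\le\theta C\,m(\theta)$ for $0\le\theta<1/B$.

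Finally, dividing by $m(\theta)(1-\theta B)>0$, integrating from $0$ to $\theta$ (with $m(0)=1$), and using the crude monotone bound $\frac{1}{1-sB}\le\frac{1}{1-\theta B}$ on $[0,\theta]$ yields $\log m(\theta)\le\frac{C\theta^{2}}{2(1-\theta B)}$. A Chernoff bound $\P(f(X)\ge t)\le e^{-\theta t}m(\theta)$ with the choice $\theta=t/(C+Bt)$ collapses the exponent exactly to $-t^{2}/(2C+2Bt)$, and applying the identical argument to $-F$ (hence to $-f$) supplies the lower tail and the factor $2$. I expect the crux to be the passage from $m'(\theta)$ to the bound $\theta\,\E[v(X)e^{\theta f(X)}]$: one must combine the two exponential terms through exchangeability and antisymmetry so that everything is expressed solely via $v(X)$, and one must separately justify differentiating $m(\theta)$ under the expectation sign (a dominated-convergence / integrability check).
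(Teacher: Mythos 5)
The paper itself offers no proof of this lemma --- it is quoted from Chatterjee's thesis (Theorem 3.9 in \cite{Cha05}) --- and your reconstruction is precisely Chatterjee's argument: the antisymmetry identity $\E[g(X)F(X,X')]=\frac12\E[(g(X)-g(X'))F(X,X')]$, the convexity bound $|e^u-e^v|\le\frac12|u-v|(e^u+e^v)$, symmetrization of the two exponential terms via exchangeability to reach $m'(\theta)\le\theta\,\E[v(X)e^{\theta f(X)}]$, the differential inequality $(1-\theta B)m'(\theta)\le\theta C\,m(\theta)$, and the Chernoff choice $\theta=t/(C+Bt)$, which indeed collapses the exponent to $-t^2/(2C+2Bt)$. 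All of this is correct for the upper tail, and your integrability caveat is the right one to flag (in the applications of this paper $f$ is bounded, so differentiating under the expectation is harmless).

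The one genuine gap is the final sentence: ``applying the identical argument to $-F$'' does not work when $B\neq 0$. Replacing $F$ by $-F$ replaces $f$ by $-f$ but leaves $v$ unchanged, so the reflected pair would need the hypothesis $v(X)\le C-Bf(X)$, which is not assumed; the condition $v(X)\le C+Bf(X)$ is one-sided in $f$, and in this paper one really has $B>0$ (e.g.\ $B=n$ for $T_1$), so the lower tail cannot be obtained by symmetry. The correct treatment, as in Chatterjee's proof, runs your same computation for $\theta<0$: the bound $|m'(\theta)|\le|\theta|\,\E[v(X)e^{\theta f(X)}]\le|\theta|\bigl(C\,m(\theta)+B\,m'(\theta)\bigr)$ (the right-hand side is nonnegative since $0\le v\le C+Bf$) rearranges to $-(1+|\theta|B)\,m'(\theta)\le|\theta|C\,m(\theta)$, hence $-\frac{d}{d\theta}\log m(\theta)\le |\theta|C$, and integrating over $[\theta,0]$ gives $m(\theta)\le e^{C\theta^2/2}$ for $\theta\le 0$. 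Optimizing $\P(f(X)\le -t)\le e^{\theta t}m(\theta)$ at $\theta=-t/C$ yields $\P(f(X)\le -t)\le \exp\bigl(-\frac{t^2}{2C}\bigr)\le \exp\bigl(-\frac{t^2}{2C+2Bt}\bigr)$ for $B,t\ge 0$, which supplies the factor $2$; note that this is exactly the genuinely Gaussian lower-tail bound recorded in the remark following Lemma \ref{lem:vx} (Chatterjee \cite{Cha06}, Theorem 1.5). With that substitution for the last step, your proof is complete.
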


\noindent 
The fundamental idea of the method of exchangeable pairs is to construct $F(X,X')$, $f(X)$ and $v(X)$ so that Lemma \ref{lem:vx} yields concentration for $f(X)$. One example to keep in mind of $F(X,X')$ is $c(X-X')$, where $c$ is a nonrandom constant.

\begin{remark}
Chatterjee (\cite{Cha06}, Theorem 1.5) has further proved under the hypotheses of Lemma \ref{lem:vx} that for the lower tail probabilities $\P(f(X) \leq -t)$ one has a genuinely Gaussian bound of the form $\exp\l(-\frac{t^2}{2C}\r)$. One can also show by a further modification of Chatterjee's method that there is a Gaussian bound for the lower tail probabilities of $T_2$ and $T_3$, but we do not pursue these bounds here.
\end{remark}

\section{Strategy of the Proofs}
For proving the concentration inequalities for $T_{1}$ and $T_{2}$, we use the following general strategy. At first we construct the statistics $T_{1}'$ and $T_{2}'$ by applying ``small" changes to $T_{1}$ and $T_{2}$, such that two properties hold. We require $(T_{j},T_{j}')$ to form an exchangeable pair and $\E\left[ T_{j}- T_{j}' \left| T_{j} \right. \right]$ to be somewhat close to $c\left(T_{j}-\E[T_{j}]\right)/n$ for each $j \in \{ 1,2 \}$. We then define the quantity $v(T_{j})$ as in the previous section and bound it in terms of $T_{j}- \E[T_{j}]$. Finally, we derive the concentration inequality for $\left|T_{j}-\E[T_{j}]\right|$ by applying Lemma \ref{lem:vx}. 

The construction of $T_{1}'$ is done by choosing two indexes $I_{1}, I_{2}$ independently and uniformly at random from $[n]$, and considering the permutation $(I_1, I_2)$ that interchanges the two indexes.
We then define the permutation $\sigma'= \sigma \circ (I_{1}, I_{2})$ and the statistic $T_{1}':= \sum_{i=1}^{n} \sum_{j=1}^{n} a_{i,j,\sigma'(i)}$, and we prove that $\E\left[ T_{1}- T_{1}' \left| T_{1} \right. \right] = 2\left(T_{1}-\E[T_{1}]\right)/n$. This is a similar procedure to the one in Chatterjee \cite{Cha05}.

The construction of $T_{2}'$ is not as simple. The main reason is that $\E[T_{2}]$ is a sum over three independent directions $i, j, k$, while, fixing $\sigma$ and $\pi$, $T_{2}$ is a sum over only one single direction. As a consequence, one might check that it is not possible to get $\E\left[T_{2}- T_{2}'\left| T_{2} \right.\right]$ close to $c\left( T_{2}- \E[T_{2}]  \right)/n$ by simply moving two indexes. Instead, one needs to move three indexes in a systematic way. We then choose $(I_{1},I_{2},I_{3})$ extracted uniformly \textit{without replacement} from $[n]$ and define the functions $\tau_{1,2}: [n]^3 \to \S_n$ such that $\tau_{1}(I_{1},I_{2},I_{3})=(I_{1},I_{2},I_{3})$ and $\tau_{2}(I_{1},I_{2},I_{3})=(I_{1},I_{3},I_{2})$. These are the only cyclic permutations which are not the identity. The permutations $\sigma',\pi'$ are defined as follows:   
$$(\sigma', \pi') = \l\{\begin{array}{ll}
\l(\sigma\circ\tau_1(I_1,I_2,I_3), \pi\circ\tau_2(I_1,I_2,I_3)\r) \quad &\text{with probability } \frac{1}{2} \\[2pt]
\l(\sigma\circ\tau_2(I_1,I_2,I_3), \pi\circ\tau_1(I_1,I_2,I_3)\r) \quad &\text{with probability } \frac{1}{2} \end{array}\r.$$
Note that this $\sigma'$ is different from the one defined in the construction of $T_1'$, but it will always be clear which one of the two we are considering. Finally we define 
\begin{equation*}
T_2' :=\sum_{i} a_{i, \sigma'(i),\pi'(i)}  .
\end{equation*}
For $\sigma'$ and $\pi'$ to be valid permutations one needs all the indexes $(I_{1},I_{2},I_{3})$ to be distinct or for all three to be the same. We only consider the case when $(I_{1},I_{2},I_{3})$ are all distinct for convenience, since the case when they are all the same does not affect the exchangeability of $T_2$ and $T_2'$ and it just gives a slight change in the result which is negligible as $n$ grows to infinity. It is important to note that one needs $\sigma'$ and $\pi'$ to be valid permutations in order for $T_{2}'$ to have the same distribution as $T_{2}$, necessary condition to have exchangeability.

\section{Proofs of the results}
\noindent 
To prove (\ref{eqn:bddsta1}), we exchange two pairs of one dimensional rows (i.e. with $n$ elements each) in the $j^{th}$ direction of the matrix $\acal$, and get
$$T_{1}' := \sum_{i=1}^n \sum_{j=1}^n a_{i,j,\sigma'(i)} = T_1 + \sum_{j=1}^n \l(a_{I_1,j,\sigma(I_2)} + a_{I_2,j,\sigma(I_1)} - a_{I_1,j,\sigma(I_1)} - a_{I_2,j,\sigma(I_2)}\r)$$
where $\sigma' = \sigma \circ (I_1, I_2)$ and $I_{1}, I_{2}$ are extracted independently and uniformly at random from $[n]$.
\begin{proposition}{\label{exch}}
$(T_{1}, T_{1}')$ is an exchangeable pair.
\end{proposition}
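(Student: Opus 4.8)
The plan is to observe that $T_1$ and $T_1'$ are the \emph{same} deterministic function of two coupled permutations, and to shift all the work onto showing that this underlying pair of permutations is exchangeable. Concretely, write $g(\tau) := \sum_{i=1}^n \sum_{j=1}^n a_{i,j,\tau(i)}$, so that $T_1 = g(\sigma)$ and $T_1' = g(\sigma')$ with $\sigma' = \sigma \circ (I_1, I_2)$. Since applying one and the same measurable map to each coordinate of an exchangeable pair preserves exchangeability, it suffices to prove that $(\sigma, \sigma') \stackrel{d}{=} (\sigma', \sigma)$ as $S_n$-valued random variables; the tail-bound machinery then only ever sees $g$ of these permutations.

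The core of the argument is a general fact about uniform permutations. First I would record the following principle: if $\sigma$ is uniform on $S_n$ and $g$ is an independent $S_n$-valued random element whose law is symmetric under inversion, i.e. $g \stackrel{d}{=} g^{-1}$, then $(\sigma, \sigma\circ g)$ is an exchangeable pair. This follows from a one-line computation of the joint law: using independence and uniformity of $\sigma$, one has $\P(\sigma = \rho_1, \sigma\circ g = \rho_2) = \frac{1}{n!}\,\P(g = \rho_1^{-1}\rho_2)$, while $\P(\sigma = \rho_2, \sigma\circ g = \rho_1) = \frac{1}{n!}\,\P(g = \rho_2^{-1}\rho_1) = \frac{1}{n!}\,\P(g^{-1} = \rho_1^{-1}\rho_2)$, and the two agree precisely because $g \stackrel{d}{=} g^{-1}$.

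It then remains to check the hypothesis for our choice $g = (I_1, I_2)$ with $I_1, I_2$ drawn independently and uniformly from $[n]$. As a group element, $g$ is the identity when $I_1 = I_2$ (probability $1/n$) and the transposition $(a,b)$ when $\{I_1,I_2\} = \{a,b\}$ with $I_1 \neq I_2$ (probability $2/n^2$ for each unordered pair). Since both the identity and every transposition are their own inverses, the law of $g$ is trivially invariant under inversion, so the principle applies and $(\sigma, \sigma')$ is exchangeable, hence so is $(T_1, T_1')$.

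I anticipate that the only genuinely delicate point is the bookkeeping around the degenerate event $I_1 = I_2$, where $(I_1, I_2)$ collapses to the identity and $\sigma' = \sigma$. This case is harmless for exchangeability — the identity is self-inverse just as transpositions are — but it must be carried along explicitly in the joint-law computation rather than silently discarded, since dropping it would break the measure-preserving bijection $\tau \mapsto \tau\circ(I_1,I_2)$ of $S_n$ that underlies the symmetry. Everything else reduces to the routine observation that right multiplication by a fixed transposition is a bijection of $S_n$ and therefore preserves the uniform measure.
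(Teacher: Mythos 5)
Your proof is correct, but it is organized around a genuinely different pivot than the paper's. The paper works directly with the law of the pair $(T_1,T_1')$: it conditions on $(\sigma,I_1,I_2)$, writes $\P(T_1=x,\,T_1'=x')$ as an average of indicators over $\gamma_1\in S_n$, and performs the change of variable $\gamma_2=\gamma_1\circ(I_1,I_2)$, using that right multiplication by the fixed involution $(I_1,I_2)$ is a measure-preserving bijection of $S_n$. You instead prove the strictly stronger statement that the permutation pair $(\sigma,\sigma')$ is itself exchangeable, via a clean general lemma: if $\sigma$ is uniform and $g$ is an independent $S_n$-valued element with $g\stackrel{d}{=}g^{-1}$, then $(\sigma,\sigma\circ g)$ is exchangeable; exchangeability of $(T_1,T_1')$ then follows by pushing forward through the single map $g(\tau)=\sum_{i,j}a_{i,j,\tau(i)}$. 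The algebraic core is identical in both arguments (translation invariance of the uniform measure plus self-inverseness of the applied permutation), but your abstraction buys two things: it yields exchangeability of $(h(\sigma),h(\sigma'))$ for \emph{every} measurable $h$ at no extra cost, and, in a two-coordinate version, it would dispatch Proposition \ref{exchII} as well, since there the randomized pair $(\tau_1,\tau_2)$ versus $(\tau_2,\tau_1)$ is exactly inversion-symmetric in distribution ($\tau_1^{-1}=\tau_2$), which is the structural reason the paper's longer computation for $T_2$ goes through. Your handling of the degenerate event $I_1=I_2$ is also sound and slightly more explicit than the paper's, which leaves that case implicit; note that on this event $g$ is the identity almost surely, so $g=g^{-1}$ holds pointwise, not merely in law, and the lemma applies without any special pleading.
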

\begin{proof}
We have the identity $\P\l(T_{1} = x, T_{1}' = x'\r) = \E\l[\P\l(T_{1} = x, T_{1}' = x' | \sigma, I_1, I_2\r)\r]$ and, since $(T_{1}, T_{1}')$ is a deterministic function of $\sigma, I_1$ and $I_2$, we can write
$$\pr\l(T_{1} = x, T_{1}' = x' | \sigma, I_1, I_2\r) = \ind\l(\sum_{i=1}^n \sum_{j=1}^n a_{i,j,\sigma(i)} = x,\ \ \sum_{i=1}^n \sum_{j=1}^n a_{i,j,\sigma\circ(I_1,I_2)(i)} = x'\r)$$
where as usual $\ind(\cdot)$ is the indicator function of the event in brackets. One then has
\begin{align*}
\pr\l(T_{1} = x, T_{1}' = x'\r) &= \E\l[\sum_{\gamma_1 \in \mathcal{S}_n}\frac{1}{n!}\cdot \ind\l(\sum_{i=1}^n \sum_{j=1}^n a_{i,j,\gamma_1(i)} = x,\ \ \sum_{i=1}^n \sum_{j=1}^n a_{i,j,\gamma_1\circ(I_1,I_2)(i)} = x'\r)\r] \\
&=\E\l[\sum_{\gamma_2 \in \mathcal{S}_n}\frac{1}{n!}\cdot \ind\l(\sum_{i=1}^n \sum_{j=1}^n a_{i,j,\gamma_2\circ(I_1,I_2)(i)} = x,\ \ \sum_{i=1}^n \sum_{j=1}^n a_{i,j,\gamma_2(i)} = x'\r)\r] \\
&= \pr\l(T_{1}' = x, T_{1} = x'\r)
\end{align*}
where we just set $\gamma_2 = \gamma_1 \circ (I_1, I_2)$. Moreover, for each fixed pair $(I_1, I_2)$, summing over all possible $\gamma_1$ is equivalent to summing over all $\gamma_2$, since both sums are made over the whole $\S_n$. 
\end{proof}

\noindent
When we take the expectation of $T_{1} - T_{1}'$ with respect to $\sigma$, $I_1$ and $I_2$, conditional on $T_{1}$, we have
\begin{align*}
\E\l[T_{1} - T_{1}' | T_{1}\r] &= \E\l[\sum_{j=1}^n \l(a_{I_1,j,\sigma(I_1)} + a_{I_2,j,\sigma(I_2)} - a_{I_1,j,\sigma(I_2)} - a_{I_2,j,\sigma(I_1)}\r) |\ T_{1}\r] \\
&= \sum_{j=1}^n \frac{1}{n^2} \sum_{i=1}^n \sum_{k=1}^n \E\l[a_{i,j,\sigma(i)} + a_{k,j,\sigma(k)} - a_{i,j,\sigma(k)} - a_{k,j,\sigma(i)} |\ T_{1}\r] \\
&= \frac2n T_{1} - \frac2n \sum_{i=1}^n \sum_{j=1}^n \frac{a_{i,j,1} + ... + a_{i,j,n}}{n} = \frac2n (T_{1} - \E[T_{1}]).
\end{align*}
We define $F(T_{1}, T_{1}') =  \frac n2 (T_{1} - T_{1}')$ and $f(T_{1}) := \E\l[F(T_{1}, T_{1}')\ |\ T_{1}\r] = T_{1} - \E[T_{1}]$, then the stochastic bound $v(T_{1})$ satisfies
\begin{align*}
v(T_{1}) &= \frac n4 \E\l[(T_{1}-T_{1}')^2\ |\ T_{1}\r] = \frac n4 \E\l[\l(\sum_{j=1}^n (a_{I_1,j,\sigma(I_1)} + a_{I_2,j,\sigma(I_2)} - a_{I_1,j,\sigma(I_2)} - a_{I_2,j,\sigma(I_1)})\r)^2 \bigg|\ T_{1} \r] \\
&\leq \frac{n^2}{2} \E\l[\sum_{j=1}^n (a_{I_1,j,\sigma(I_1)} + a_{I_2,j,\sigma(I_2)} + a_{I_1,j,\sigma(I_2)} + a_{I_2,j,\sigma(I_1)}) \bigg|\ T_{1} \r] \\
&= \frac{n^2}{2} \cdot \frac{1}{n^2} \sum_{i=1}^n \sum_{k=1}^n \E\l[\sum_{j=1}^n \l( a_{i,j,\sigma(i)} + a_{k,j,\sigma(k)} + a_{i,j,\sigma(k)} + a_{k,j,\sigma(i)}\r) \bigg|\ T_{1}\r] \\
&= n\cdot T_{1} + n \sum_{i=1}^n \sum_{j=1}^n \frac{a_{i,j,1} + ... + a_{i,j,n}}{n}\ =\ n (T_{1} + \E[T_{1}])\ =\ n\l(f(T_{1}) + 2\cdot \E[T_{1}]\r).
\end{align*}
Here we used the observation that if $\alpha$ and $\beta$ are non-negative numbers bounded by D, then $(\alpha-\beta)^2 \leq D\cdot(\alpha + \beta)$.
The hypothesis of Lemma \ref{lem:vx} then hold with $B = n$ and $C = 2n\E[T_{1}]$, and Lemma \ref{lem:vx} gives us
$$\P\l(|T_{1} - \E[T_1]| \geq t\r) \leq 2 \exp\l\{-\frac{t^2}{2n\cdot(2\E[T_{1}] + t)}\r\}.$$
\hfill{$\square$}

\begin{remark}
As $n$ increases, this bound gets weaker. If we consider $t$ increasing faster than $n$, for example $t = n^{1+\lambda}$, with $\lambda > 0$, we get
$$\P\l(|T_{1} - \E[T_1]| \geq n^{1+\lambda}\r) \leq 2 \exp\l\{-\frac{n^{2+2\lambda}}{2n\cdot(2\E[T_{1}] + n^{1+\lambda})}\r\} \approx 2 \exp\l\{-\frac{n^{\lambda}}{2}\r\}$$
as $n$ grows. 
\end{remark}

\noindent 
Now to prove (\ref{eqn:bddsta2}), we need a more delicate argument.
For $k \in \{1,2,3 \}$ we define $C_{k}$ to be the set of all ordered tuples $ (I_{1},I_{2},I_{3}) \in [n]^{3}$ such that $\#\{ I_{1},I_{2},I_{3} \}=k$. 
As already mentioned before, we need $(I_{1},I_{2},I_{3}) \in \l\{C_1, C_{3}\r\}$ in order for $\tau_{1}(I_{1},I_{2},I_{3})$ and $\tau_{2}(I_{1},I_{2},I_{3})$ to be valid permutations. It is easy to see that in that case we have $\tau_{1}(I_{1},I_{2},I_{3})^{-1}= \tau_{2}(I_{1},I_{2},I_{3})$. On the contrary, when $(I_{1},I_{2},I_{3}) \in C_2$, the permutations $\sigma'$ and $\pi'$ are not well-defined.
With the choice of $(I_{1},I_{2},I_{3}) \in C_{3}$, and $(\sigma', \pi')$ defined as before, we have

\begin{align*}
T_2' &:= \sum_{i} a_{i, \sigma'(i),\pi'(i)} \\
&= T_2 - \sum_{j=1}^3 a_{I_j, \sigma(I_j), \pi(I_j)} + \left\{ 
\begin{array}{cc}
\sum_{j=1}^3 a_{I_j, \sigma\circ\tau_1(I_1,I_2,I_3)(I_j), \pi\circ\tau_2(I_1,I_2,I_3)(I_j)} \quad &\text{with prob. } \frac{1}{2} \\[4pt]
\sum_{j=1}^3 a_{I_j, \sigma\circ\tau_2(I_1,I_2,I_3)(I_j), \pi\circ\tau_1(I_1,I_2,I_3)(I_j)} \quad &\text{with prob. } \frac{1}{2}.
\end{array}
\right.
\end{align*}

\begin{proposition}\label{exchII}
$(T_2,T_2')$ forms an exchangeable pair.
\end{proposition}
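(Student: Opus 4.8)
The plan is to mirror the proof of Proposition \ref{exch}: condition on all the auxiliary randomness and then exhibit a measure-preserving substitution on the pair $(\sigma,\pi)$ that interchanges the roles of $T_2$ and $T_2'$. Write $U$ for the fair coin that selects between the two cases in the definition of $(\sigma',\pi')$, and abbreviate $\tau_1=\tau_1(I_1,I_2,I_3)$ and $\tau_2=\tau_2(I_1,I_2,I_3)$. Since $(T_2,T_2')$ is a deterministic function of $(\sigma,\pi,I_1,I_2,I_3,U)$, I would first condition on a fixed triple $(I_1,I_2,I_3)\in C_3$ and on $U$, so that the inner probability becomes an average of indicators over the uniform pair $(\sigma,\pi)\in\S_n\times\S_n$, exactly as in the $T_1$ argument.

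The key structural fact is that on $C_3$ one has $\tau_1^{-1}=\tau_2$, so the two branches of the construction are inverses of one another. Fix the triple and consider the branch $U=0$, where $\sigma'=\sigma\circ\tau_1$ and $\pi'=\pi\circ\tau_2$. Substitute $\tilde\sigma=\sigma\circ\tau_1$ and $\tilde\pi=\pi\circ\tau_2$, so that $\sigma=\tilde\sigma\circ\tau_2$ and $\pi=\tilde\pi\circ\tau_1$. Under this substitution the first coordinate $\sum_i a_{i,\sigma(i),\pi(i)}$ becomes $\sum_i a_{i,\tilde\sigma\circ\tau_2(i),\tilde\pi\circ\tau_1(i)}$, which is precisely $T_2'$ evaluated at $(\tilde\sigma,\tilde\pi)$ in the branch $U=1$, while the second coordinate $\sum_i a_{i,\sigma\circ\tau_1(i),\pi\circ\tau_2(i)}$ becomes $\sum_i a_{i,\tilde\sigma(i),\tilde\pi(i)}=T_2(\tilde\sigma,\tilde\pi)$. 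Because $(\sigma,\pi)\mapsto(\tilde\sigma,\tilde\pi)=(\sigma\circ\tau_1,\pi\circ\tau_2)$ is a bijection of $\S_n\times\S_n$ (right-composition with fixed permutations), summing over $(\sigma,\pi)$ is the same as summing over $(\tilde\sigma,\tilde\pi)$, and one obtains
$$\P\big(T_2=x,\ T_2'=x'\ \big|\ I_1,I_2,I_3,\ U=0\big)=\P\big(T_2=x',\ T_2'=x\ \big|\ I_1,I_2,I_3,\ U=1\big).$$
The identical computation with $\tilde\sigma=\sigma\circ\tau_2$ and $\tilde\pi=\pi\circ\tau_1$ yields the companion identity with the two values of $U$ exchanged.

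Averaging these two identities over the fair coin $U$ collapses the dependence on the branch and produces, for every fixed triple,
$$\P\big(T_2=x,\ T_2'=x'\ \big|\ I_1,I_2,I_3\big)=\P\big(T_2=x',\ T_2'=x\ \big|\ I_1,I_2,I_3\big),$$
after which taking the expectation over the uniform choice of $(I_1,I_2,I_3)\in C_3$ gives $\P(T_2=x,T_2'=x')=\P(T_2=x',T_2'=x)$, i.e. exchangeability. The step that requires the most care—and the reason the construction needs a coin flip at all—is the bookkeeping in the substitution: a single $3$-cycle is not an involution, so applying $\tau_1$ to $\sigma$ and $\tau_2$ to $\pi$ does not by itself return a symmetric pair. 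It is exactly the fact that the reverse move ($\tau_2$ on $\sigma$, $\tau_1$ on $\pi$) is the other branch of the construction, mixed in with equal probability, that makes the substitution carry branch $U=0$ to branch $U=1$ with $x$ and $x'$ interchanged. Keeping straight which of $\tau_1,\tau_2$ acts on $\sigma$ versus $\pi$, together with checking that the substitution preserves the uniform measure on $\S_n\times\S_n$, is the only genuinely delicate point; everything else is a direct transcription of the $T_1$ argument.
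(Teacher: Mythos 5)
Your proposal is correct and takes essentially the same route as the paper: the paper performs exactly your substitution (setting $\gamma_3=\gamma_1\circ\tau_1$, $\gamma_4=\gamma_2\circ\tau_2$ and $\gamma_5=\gamma_1\circ\tau_2$, $\gamma_6=\gamma_2\circ\tau_1$), relying on $\tau_1^{-1}=\tau_2$ on $C_3$ and on right-composition being a measure-preserving bijection of $\mathcal{S}_n\times\mathcal{S}_n$. Your conditioning on the coin $U$ and the observation that the substitution carries the branch $U=0$ into the branch $U=1$ with $x$ and $x'$ interchanged is precisely the paper's cross-matching of its two $\frac12$-weighted indicator sums, so there is no gap.
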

\begin{proof}
We start on the same lines of Proposition \ref{exch}. We first write the equation $\P\l( T_2= x, T_2'= x' \r) = \E \l[ \P\l(T_2=x, T_2'=x'\l|\sigma,\pi, \sigma',\pi' \r.\r)\r]$ and, since $(T_2,T_2')$ is a function of $(\sigma,\pi,\sigma',\pi')$, we have
$$\P\l(T_2=x, T_2'=x' \l| \sigma,\pi,\sigma',\pi' \r.\r)=\ind\l(\sum_{i}a_{i,\sigma(i),\pi(i)}=x , \sum_{i} a_{i,\sigma'(i),\pi'(i)}= x'\r) .$$
We set $\gamma_3 = \gamma_1\circ\tau_{1}(I_{1},I_{2},I_{3}), \ \gamma_4 = \gamma_2\circ\tau_{2}(I_{1},I_{2},I_{3}), \ \gamma_5 = \gamma_1\circ\tau_{2}(I_{1},I_{2},I_{3})$ and $\gamma_6 = \gamma_2\circ\tau_{1}(I_{1},I_{2},I_{3})$, to get
\begin{align*} 
&\P\l( T_2= x, T_2'= x' \r) \\
&= \E \l[\sum_{\gamma_{1},\gamma_{2} \in \mathcal{S}_n^{2}} \frac{1}{(n!)^2} \l\{\frac12\cdot\ind\l(\sum_{i}a_{i,\gamma_{1}(i),\gamma_{2}(i)}=x, \sum_{i}a_{i, \gamma_{1}\circ \tau_{1}(I_{1},I_{2},I_{3})(i),\gamma_{2}\circ \tau_{2}(I_{1},I_{2},I_{3})(i)}=x'  \r)  \r.\r.\\
&\l.\l. \qquad + \frac12\cdot\ind\l(\sum_{i}a_{i,\gamma_{1}(i),\gamma_{2}(i)}=x, \sum_{i}a_{i, \gamma_{1}\circ \tau_{2}(I_{1},I_{2},I_{3})(i),\gamma_{2}\circ \tau_{1}(I_{1},I_{2},I_{3})(i)}=x'  \r)\r\}  \r]\\
&= \E \l[\sum_{\gamma_{3},\gamma_{4} \in \mathcal{S}_n^{2}} \frac{1}{2(n!)^2} \cdot\ind\l(\sum_{i}a_{i, \gamma_{3}\circ \tau_{2}(I_{1},I_{2},I_{3})(i),\gamma_{4}\circ \tau_{1}(I_{1},I_{2},I_{3})(i)}=x , \sum_{i}a_{i,\gamma_{3}(i),\gamma_{4}(i)}=x'  \r)  \r.\\
&\l. \qquad + \sum_{\gamma_{5},\gamma_{6} \in \mathcal{S}_n^{2}} \frac{1}{2(n!)^2}\cdot\ind \l(\sum_{i}a_{i, \gamma_{5}\circ \tau_{1}(I_{1},I_{2},I_{3})(i),\gamma_{6}\circ \tau_{2}(I_{1},I_{2},I_{3})(i)}=x , \sum_{i}a_{i,\gamma_{5}(i),\gamma_{6}(i)}=x' \r) \r]\\
&=\P\l(T_2=x', T_2'=x \r).
\end{align*}
We have used the facts that $\tau_{1}(I_{1},I_{2},I_{3})^{-1}=\tau_{2}(I_{1},I_{2},I_{3})$
and that for each tuple $(I_1,I_2,I_3)$ it is equivalent to sum over all the pairs $(\gamma_1, \gamma_2), (\gamma_3, \gamma_4)$ or $(\gamma_5, \gamma_6)$.
\end{proof}

\noindent
The next goal is to find an expression for $\E\l[ T_{2}-T_{2}' | T_{2} \r]$ that allows us to define $F(T_2, T_2')$ and $f(T_2)$. First observe that $\E\left[ T_2'\left| T_2\right. \right]= \E\left[\E \left[  T_2'\left| \sigma,\pi\right. \right]\left|T_2 \right.\right]$, and that one also has
\begin{align*}{\label{expX'}} \nonumber
    \E \l[  T_2'\big| \sigma,\pi \r] &= \E \l[ T_2- \sum_{j=1}^{3} a_{I_{j},\sigma(I_{j}),\pi(I_{j})}+ 
    \frac{1}{2}\l(\sum_{j=1}^{3} a_{I_{j},\sigma\circ \tau_{1}(I_1,I_{2},I_{3})(I_{j}),\pi\circ \tau_{2}(I_1,I_{2},I_{3})(I_{j})} \r) \r.\\ \nonumber
    &+ \l.\frac{1}{2}\l(\sum_{j=1}^{3} a_{I_{j},\sigma\circ \tau_{2}(I_1,I_{2},I_{3})(I_{j}),\pi\circ \tau_{1}(I_1,I_{2},I_{3})(I_{j})}\r) \big| \sigma,\pi\r] .
\end{align*}
We deal separately with the terms in the last expression. 
First of all,
\begin{equation*}
     \E\l[\E\l[ \sum_{j=1}^{3} a_{I_{j},\sigma(I_{j}),\pi(I_{j})} \l| \sigma,\pi\r.\r] \big| T_{2}\r] = \E\l[\sum_{j=1}^3 \frac1n\sum_{i=1}^n a_{i, \sigma(i), \pi(i)} \big| T_{2}\r] = \frac{3}{n} T_{2}.
\end{equation*}
Now, for any $j \in \{1,2,3\}$, 
\begin{align*}
&\E\l[a_{I_{j},\sigma\circ \tau_{1}(I_1,I_{2},I_{3})(I_{j}),\pi\circ \tau_{2}(I_1,I_{2},I_{3})(I_{j})}\l| \sigma, \pi \r.\r] =  \E\l[a_{I_{j},\sigma\circ \tau_{2}(I_1,I_{2},I_{3})(I_{j}),\pi\circ \tau_{1}(I_1,I_{2},I_{3})(I_{j})}\l| \sigma, \pi \r.\r]\\
&\quad = \frac{1}{n(n-1)(n-2)} \sum_{(i,j,k) \in C_{3}} a_{i,\sigma(j),\pi(k)}
\end{align*}
 which implies that
\begin{align*}
&\E\l[\frac{1}{2}\l(\sum_{j=1}^{3} a_{I_{j},\sigma\circ \tau_{1}(I_1,I_{2},I_{3})(I_{j}),\pi\circ \tau_{2}(I_1,I_{2},I_{3})(I_{j})} \r) + \frac{1}{2}\l( \sum_{j=1}^{3} a_{I_{j},\sigma\circ \tau_{2}(I_1,I_{2},I_{3})(I_{j}),\pi\circ \tau_{1}(I_1,I_{2},I_{3})(I_{j})}\r) \big| \sigma,\pi\r] \\
&= \frac{3}{n(n-1)(n-2)}\sum_{(i,j,k) \in C_{3}} a_{i,\sigma(j),\pi(k)}\\
&= \frac{3}{n(n-1)(n-2)}\l[ \sum_{i=1}^{n}\sum_{j=1}^{n}\sum_{k=1}^{n} a_{i,j,k} - \sum_{i} a_{i,\sigma(i),\pi(i)}-\sum_{(i,j,k) \in C_2} a_{i,\sigma(j),\pi(k)}\r].
\end{align*}
We define $Y(\sigma,\pi) := \sum_{(i,j,k) \in C_2} a_{i,\sigma(j),\pi(k)}$, and notice that $0\le Y(\sigma,\pi) \le 3n(n-1)$ irrespective of $\sigma$ and $\pi$. Putting the previous pieces together, we then get
\begin{equation}{\label{bound on T'}}
\E\l[T_2' | T_2\r] = T_2 - \frac{3}{n} T_{2} + \frac{3n \E[T_{2}]}{(n-1)(n-2)}-\frac{3 T_{2} + 3 \E[Y(\sigma,\pi)|T_{2}]}{n(n-1)(n-2)}.
\end{equation}
Using the expressions above, we have
\begin{align*}
         \E\l[ T_{2}-T_{2}' | T_{2} \r] &= \l(\frac{3}{n}+ \frac{3}{n(n-1)(n-2)}\r)\cdot T_{2}- \frac{3n  \E[T_{2}]}{(n-1)(n-2)} + \frac{3 \E[Y(\sigma,\pi)|T_{2}]}{n(n-1)(n-2)}\\
         &=  \frac{3(n^2-3n+3) T_{2}}{n(n-1)(n-2)} -  \frac{3n \E[T_{2}]}{(n-1)(n-2)}  + \frac{3\E[Y(\sigma,\pi)|T_{2}]}{n(n-1)(n-2)}\\
         &= \frac{3(n^{2}-3n+3)}{n(n-1)(n-2)}\cdot\l[T_{2}-\E[T_{2}]\r] - \frac{9 \E[T_{2}]}{n(n-2)}+ \frac{3 \E[Y(\sigma,\pi)|T_{2}]}{n(n-1)(n-2)}
\end{align*}
and it makes sense now to define
$$F(T_{2},T_{2}')=\frac{n(n-1)(n-2)}{3(n^{2}-3n+3)}\cdot(T_{2}-T_{2}')$$ 
so that 
\begin{equation}\label{expF(X,X')}
        f(T_{2}) = T_{2}- \E[T_{2}] - \frac{3(n-1)\E[T_{2}]}{n^{2}-3n+3}+ \frac{\E\l[ Y(\sigma,\pi)\l|T_{2} \r.\r]}{n^{2}-3n+3} .
\end{equation}  
From (\ref{expF(X,X')}) one has
$$f(T_{2})-f(T_{2}') = T_{2}-T_{2}' + \frac{1}{n^{2}-3n+3} \l[\E\l[ Y(\sigma,\pi)\l|T_{2} \r.\r] -  \E\l[ Y(\sigma',\pi')\l|T_{2}' \r.\r]  \r]  $$
and the following important consequences
\begin{equation}{\label{fgreat}}
f(T_{2})\ge T_{2}-\E[T_{2}] - 3+O\l(\frac{1}{n}\r) 
\end{equation} 
and 
\begin{equation}{\label{fsmall}}
f(T_{2})\le T_{2}-\E[T_{2}] + 3 +O\l(\frac{1}{n}\r). 
\end{equation}
As before we want to upper bound the function $v(T_{2})$, to finally invoke Lemma \ref{lem:vx}.
\begin{align} \nonumber
        v(T_{2}) &= \frac12 \E\l[|(f(T_{2}) - f(T_{2}'))\cdot F(T_{2}, T_{2}')| \ \bigg| \ T_{2}\r] \\ 
         &\le \frac{n(n-1)(n-2)}{6(n^{2}-3n+3)}\E\l[(T_{2}-T_{2}')^{2}\l|T_{2} \r.  \r] + 
         \frac{n^2(n-1)^{2}(n-2)}{2(n^{2}-3n +3)^2}\E\l[ \l| T_{2}-T_{2}' \r|\l|T_{2}  \r. \r]  \label{expV(X)}
\end{align}
So the last task is now to upper bound these two quantities. 
With a calculation similar as the one used to obtain (\ref{bound on T'}), we get
\begin{align*}
        &\E\l[\l|T_{2}-T_{2}'\r| |\ T_{2} \r] \le \E\l[ \sum_{j=1}^{3}a_{I_{j},\sigma(I_{j}),\pi(I_{j})} \bigg|\ T_{2} \r] \\
        &\quad + \frac{1}{2} \E\l[\sum_{j=1}^{3} a_{I_{j},\sigma \circ \tau_{1}(I_{1},I_{2},I_{3})(I_{j}),\pi \circ \tau_{2}(I_{1},I_{2},I_{3})(I_{j})} + \sum_{j=1}^{3} a_{I_{j},\sigma \circ \tau_{2}(I_{1},I_{2},I_{3})(I_{j}),\pi \circ \tau_{1}(I_{1},I_{2},I_{3})(I_{j})}\ \bigg|\ T_{2} \r]\\
        &= \frac{3}{n} T_{2} + \frac{3n}{(n-1)(n-2)} \E[T_{2}]-\frac{3}{n(n-1)(n-2)}\l[T_{2}+ \E\l[Y(\sigma,\pi) | T_{2}\r]\r]\\
        & \le \frac{3}{n} T_{2} + \frac{3n}{(n-1)(n-2)} \E[T_{2}].
\end{align*}
and
\begin{align*}
    &\E\l[\l( T_{2}-T_{2}'\r)^{2}\l| T_{2} \r. \r] = \E\l[\frac{1}{2}\l(\sum_{j=1}^{3} a_{I_{j},\sigma(I_{j}),\pi(I_{j})}- \sum_{j=1}^{3} a_{I_{j},\sigma \circ \tau_{1}(I_{1},I_{2},I_{3})(I_{j}),\pi \circ \tau_{2}(I_{1},I_{2},I_{3})(I_{j})} \r)^2 \r.\\ 
        &\quad + \l. \frac{1}{2} \l(\sum_{j=1}^{3} a_{I_{j},\sigma(I_{j}),\pi(I_{j})}- \sum_{j=1}^{3} a_{I_{j},\sigma \circ \tau_{2}(I_{1},I_{2},I_{3})(I_{j}),\pi \circ \tau_{1}(I_{1},I_{2},I_{3})(I_{j})} \r)^2 \bigg| T_{2} \r]\\
    &\le 3 \E\l[ \sum_{j=1}^{3}a_{I_{j},\sigma(I_{j}),\pi(I_{j})}+ \frac{1}{2}\l(\sum_{j=1}^{3} a_{I_{j},\sigma \circ \tau_{1}(I_{1},I_{2},I_{3})(I_{j}),\pi \circ \tau_{2}(I_{1},I_{2},I_{3})(I_{j})} + \sum_{j=1}^{3} a_{I_{j},\sigma \circ \tau_{2}(I_{1},I_{2},I_{3})(I_{j}),\pi \circ \tau_{1}(I_{1},I_{2},I_{3})(I_{j})} \r) \bigg| T_{2} \r] \\
    &\le \frac{9}{n}T_{2}+  \frac{9n}{(n-1)(n-2)} \E[T_{2}].
\end{align*}
Using these estimates in (\ref{expV(X)}), together with the lower bound on $f(X)$ obtained in (\ref{fgreat}), we have 
\begin{align*}
v(T_{2}) &\le \l(3+O\l(\frac{1}{n}\r)\r)(T_{2}+ \E[T_{2}]) \\
&\le \l(3+O\l(\frac{1}{n}\r)\r) \l(T_{2}-\E[T_{2}]-3 + 2\E[T_{2}]+3+O\l(\frac{1}{n} \r)\r) \\
&\le \l(3+O\l(\frac{1}{n}\r)\r)\l(f(T_{2})+ 2\E[T_{2}] + 3 + O\l(\frac{1}{n} \r) \r).     
\end{align*}
So, making use again of Lemma \ref{lem:vx}, we obtain the concentration inequality
$$
\P\left[|f(T_{2})| \geq t \right] \le 2 \exp\left( -\frac{t^{2}}{12\E(X) + 18 + 6(1 + O(1/n))\cdot t} \right) .
$$
By (\ref{fgreat}) and (\ref{fsmall}) we obtain a new concentration inequality:
\begin{align}{\label{concT2}} \nonumber
\P(|T_{2} - \E[T_{2}]| \geq t) &\leq \P(|f(T_{2})| + 3 + O(1/n) > t) \\ 
&= \P(|f(T_{2})| \geq t - 3 + O(1/n)) \\ \nonumber
&\leq 2 \exp\left( -\frac{(t-3 +O(1/n))^{2}}{12\E(T_{2}) + 18 + 6(1+O(1/n))(t-3)} \right).
\end{align}\hfill{$\square$}

\section{Conclusion and Possible Extension}

We obtained a Bernstein type tail bounds for the statistics $T_1$ and $T_2$. These are three dimensional generalizations of the sum of choices $T_3$, which has already been studied in \cite{Cha05} and \cite{Hoe51}.
A natural extension of this work consists in finding a concentration inequality for a d-dimensional generalization of the statistic $T_2$, of the form
$$T_4 = \sum_{i} a_{i, \pi_1(i),...,\pi_{d-1}(i)} $$
where $a_{i_1, ..., i_d} \in [0, 1]$ and $\pi_j \in \S_n$ for $j = 1, ..., d-1$. Here, one should extract $d$ indexes $(I_1, ..., I_d)$ \textit{without replacement} and consider the functions $\tau_{j} : [n]^d \to \S_n$ for $j = 1, ..., d-1$, such that $\tau_j(I_1,..., I_d)$ is one of the $d-1$ cyclic permutations which are not the identity (for clarity, let it be the rotation of $j$ positions forward, where $I_1$ gets mapped into $I_{j+1}$). When considering $d$ indexes, it is equivalent to rotate each index by $j$ positions in one direction, or by $d-1-j$ positions in the opposite direction. For this reason one has that $\tau_j(I_1,..., I_d)^{-1} = \tau_{d-1-j}(I_1,..., I_d)$.
We define 
$$\pi_j' = \l\{\begin{array}{ll}
\pi_j \circ \tau_j(I_1, ..., I_d) \qquad &\text{with probability } \ \frac12 \\[3pt]
\pi_j \circ \tau_j(I_1, ..., I_d)^{-1} \qquad &\text{with probability } \ \frac12 \end{array}\r.$$
and then proceed as done before, defining $T_4'$ using the $\pi'_j$ permutations and showing that $(T_4, T_4')$ is an exchangeable pair. To find the tail bound for $T_{4}$, the calculation is similar as before but more cumbersome, and we have not implemented it in the current paper. The main reason for choosing this type of cyclic permutation is because, for every fixed $l \in [d]$, the indexes $\left(I_{l}, \tau_{1}(I_{1},\ldots,I_{d})(I_{l}), \ldots, \tau_{d}(I_{1},\ldots, I_{d})(I_{l}) \right)$ are distinct. Then, when fixing $\pi_{1},\ldots, \pi_{d-1}$, the expectation 
$$
\E\left[  a_{I_{l}, \pi_{1} \circ \tau_{1}(I_{1},\ldots,I_{d})(I_{l}), \ldots, \pi_{d-1} \circ \tau_{d-1}(I_{1},\ldots,I_{d})(I_{l})  }\left| \pi_{1},\ldots, \pi_{d-1} \right. \right]
$$
 contains the sum over all the possible independent directions, while leaving out the cases when some indexes are repeated. This is the fundamental observation which allows us to write $\E\left[ T_{4} - T_{4}'\left| T_{4} \right.  \right]$ in the convenient way to apply Lemma \ref{lem:vx}.

\section{Acknowledgements}
The authors are pleased to thank J. Michael Steele for his encouragement and advice.

\end{document}